\numberwithin{equation}{section}
\theoremstyle{plain}
\newtheorem{theorem}{Theorem}[section]
\newtheorem{lemma}[theorem]{Lemma}
\newtheorem{proposition}[theorem]{Proposition}
\theoremstyle{definition}
\theoremstyle{remark}
\renewcommand{\Re}{\operatorname{Re}}
\newcommand{\sym}{\operatorname{sym}}
\newcommand{\GL}{\operatorname{GL}}
\newcommand{\SL}{\operatorname{SL}}
\def\@tocline#1#2#3#4#5#6#7{\relax
  \ifnum #1>\c@tocdepth 
  \else
    \par \addpenalty\@secpenalty\addvspace{#2}%
    \begingroup \hyphenpenalty\@M
    \@ifempty{#4}{%
      \@tempdima\csname r@tocindent\number#1\endcsname\relax
    }{%
      \@tempdima#4\relax
    }%
    \parindent\z@ \leftskip#3\relax \advance\leftskip\@tempdima\relax
    \rightskip\@pnumwidth plus4em \parfillskip-\@pnumwidth
    #5\leavevmode\hskip-\@tempdima
      \ifcase #1
       \or\or \hskip 1em \or \hskip 2em \else \hskip 3em \fi%
      #6\nobreak\relax
    \hfill\hbox to\@pnumwidth{\@tocpagenum{#7}}\par
    \nobreak
    \endgroup
  \fi}
\begin{document}

\title[Lower bounds for moments]
{Lower bounds for moments of quadratic twisted self-dual $\GL(3)$ central $L$-values}

\author{Shenghao Hua and Bingrong Huang}

\address{Data Science Institute and School of Mathematics \\ Shandong University \\ Jinan \\ Shandong 250100 \\China}
\email{huashenghao@vip.qq.com}
\email{brhuang@sdu.edu.cn}

\date{\today}

\begin{abstract}
In this paper, we prove the conjectured order lower bound for the $k$-th moment of central values of quadratic twisted self-dual $\GL(3)$ $L$-functions for all $k\geq 1$, based on our recent work on the twisted first moment of central values in this family of $L$-functions.
\end{abstract}

\keywords{lower bound, $\GL(3)$ cusp form, quadratic twist, central value,  $L$-function}

\maketitle

\section{Introduction} \label{sec:Intr}

Estimating moments of central values in families of $L$-functions  is an active topic of number theory.
There are now well-established conjectures for these moments,
thanks to the work of
Keating and Snaith \cite{KS00a,KS00b},
with subsequently works of
Diaconu, Goldfeld and Hoffstein \cite{DGH03} and Conrey, Farmer, Keating, Rubinstein and Snaith \cite{CFKRS05}.
In \cite{RS05,RS06}, Rudnick and Soundararajan developed a  method for establishing conjectured order lower bounds for moments of central values of families of $L$-functions, provided a little more than the first moment of this family of $L$-functions can be computed.
This method was extended by Radziwi{\l\l} and Soundararajan \cite{RS13} and Heap and  Soundararajan \cite{HS20} to all positive real moments.
Recently, the authors \cite{HH21} proved an asymptotic formula for the twisted first moment of central values of quadratic twisted $\GL(3)$ $L$-functions.
In this paper, we work out the conjectured order lower bound for the $k$-th moment of central values of this family of $L$-functions for all $k\geq 1$, based on \cite{HS20} and \cite{HH21}.

Let $\phi$ be a self-dual Hecke--Maass cusp form of type $(\nu,\nu)$ for $\SL(3,\mathbb{Z})$ with the normalized Fourier coefficients
 $A(m,n)$.
We have the conjugation relation $A(m,n)=\overline{A(n,m)}=A(n,m)$, see Goldfeld \cite[Theorem 9.3.11]{GB06}.
For an automorphic representation $\pi$ of $\GL(3,\mathbb{A_Q})$,
the symmetric square lift $L$-function $L(s,\sym^2\pi)$ has a simple pole at $s=1$ if and only if $\pi$ is the Gelbart--Jacquet lift  of an automorphic representation on $\GL(2,\mathbb{A_Q})$ with trivial central character, see \cite{GRS99}. That is, $\pi$ is a self-contragredient cuspidal automorphic representation, see \cite{GJ76} and \cite{RM14}.
So $\phi$ is self-dual if and only if $L(s,\sym^2 \phi)$ has a simple pole at $s=1$. Note that when the $\GL(3)$ cusp form is self-dual, its Fourier coefficients are all real.
We denote $\theta_3$ be the least common upper bound of power of $p$ for $|A(p,1)|$, i.e., $|A(p,1)|\leq 3p^{\theta_3}$ for all prime $p$.
The Generalized Ramanujan Conjecture implies that $\theta_3=0$, and from Kim--Sarnak \cite[Appendix 2]{Kim2003} we know $\theta_3\leq\frac{5}{14}$.

Any real primitive character 
must be of the form $\chi_d(n)=(\frac{d}{n})$ where $d$ is a fundamental discriminant \cite[Theorem 9.13]{MV07}, i.e., a product of pairwise coprime integers of the form $-4$, $\pm 8$, $(-1)^{\frac{p-1}{2}}p$ where $p$ is an odd prime.
There are two primitive characters to the modulus $q$ if $8\parallel q$ and only one otherwise.
For $\Re s$ sufficiently large we define the twisted $L$-function $L(s,\phi\otimes \chi_{8d})$ to be
$$L(s,\phi\otimes \chi_{8d})=\sum_{n=1}^\infty \frac{A(n,1)\chi_{8d}(n)}{n^s}.$$
It has an analytic continuation to the entire complex plane and satisfies the functional equation (see \cite[Theorem 7.1.3]{GB06})
\begin{equation*}
  \Lambda (s,\phi\otimes \chi_{8d}) := (8d)^{s/2}\pi^ {-3s/2}\prod_{i=1}^3\Gamma\left(\frac{s-\gamma_i}{2}\right)L(s,\phi\otimes \chi_{8d})
  =  \Lambda (1-s,\phi\otimes \chi_{8d})
\end{equation*}
when $d$ is positive and $\phi$ is self-dual, where $\gamma_1=1-3\nu$, $\gamma_2=0$, and $\gamma_3=-1+3\nu$ are the Langlands parameters of $\phi$.

Recently, we \cite{HH21} proved asymptotic formulas for the twisted first moment of quadratic twisted $\GL(3)$ central $L$-values.
In this paper,
we only consider the self-dual case and prove the conjectured lower bound for the $k$-th moment for quadratic twisted self-dual $\GL(3)$ central $L$-values for all $k\geq 1$, by using our twisted first moment result.
We will use the lower bounds principle of Heap and Soundararajan  \cite{HS20}, where
they achieved lower bounds of moments of Riemann zeta function on the critical line,
which is a case touch to unitary group.
Recently, Gao and Zhao \cite{GZ21} estiblished the analogous result on lower bounds for moments of central values of quadratic twisted modular form $L$-functions.
We extend their method to the $\GL(3)$ case.
Our main result is as follows.

\begin{theorem}\label{thm1}
Let $\phi$ be a self-dual Hecke--Maass cusp form of $\SL(3,\mathbb{Z})$, and $\Phi$ a smooth nonnegative Schwarz class function supported in interval $(1,2)$.
For any $k\geq 1$ we have
\begin{equation}\label{eq1}
  \mathop{\sum\nolimits^\flat}_{2\nmid d} \left| L\left(\frac{1}{2},\phi\otimes \chi_{8d}\right)\right|^k \Phi\left(\frac{d}{X}\right)
  \gg_{k,\Phi} X(\log X)^{\frac{k(k+1)}{2}},
\end{equation}
where $\sum_{2\nmid d}\nolimits^\flat$ means summing over positive odd square-free $d$.
\end{theorem}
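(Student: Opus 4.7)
The plan is to adapt the lower bounds principle of Heap and Soundararajan \cite{HS20}, as carried out by Gao and Zhao \cite{GZ21} for quadratic twists of $\GL(2)$ $L$-functions, to the self-dual $\GL(3)$ family, using the twisted first moment of \cite{HH21} as the analytic input. The essential arithmetic fact is that for self-dual $\phi$ on $\GL(3)$ the Rankin--Selberg convolution $L(s,\phi\otimes\phi)=L(s,\sym^2\phi)\,L(s,\wedge^2\phi)$ carries a simple pole at $s=1$ coming from $L(s,\sym^2\phi)$, so that
\[
\sum_{p\le Y}\frac{A(p,1)^2}{p}=\log\log Y+O(1);
\]
upon exponentiating and averaging, this is what will generate the target power of $\log X$.

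Fix a small $\theta>0$ and partition the primes in $(1,X^\theta]$ into geometrically spaced intervals $I_j=(X_{j-1},X_j]$. For each $j$ set
\[
\mathcal{P}_j(d):=\sum_{p\in I_j}\frac{A(p,1)\chi_{8d}(p)}{\sqrt p},\qquad
\mathcal{N}_j(d,z):=\sum_{r=0}^{\ell_j}\frac{\bigl(z\mathcal{P}_j(d)\bigr)^r}{r!},
\]
and $\mathcal{N}(d,z):=\prod_j\mathcal{N}_j(d,z)$, with integer cutoffs $\ell_j$ chosen so that $\prod_j X_j^{\ell_j}\le X^{\vartheta}$ for some small $\vartheta>0$. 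Then $\mathcal{N}(d,z)$ is a Dirichlet polynomial in $\chi_{8d}$ of length $\le X^{\vartheta}$, short enough to be handled by the first moment of \cite{HH21}. Taking $z=k-1$, so that $\mathcal{N}(d,k-1)$ plays the role of a proxy for $L(\tfrac12,\phi\otimes\chi_{8d})^{k-1}$, Hölder's inequality with exponents $k$ and $k/(k-1)$ gives
\[
\sum_{2\nmid d}^{\flat}L(\tfrac12,\phi\otimes\chi_{8d})\,\mathcal{N}(d,k-1)\,\Phi\!\left(\frac{d}{X}\right)\le S_1^{1/k}\,S_2^{(k-1)/k},
\]
where $S_1$ is the $k$-th moment in \eqref{eq1} and $S_2=\sum_{2\nmid d}^{\flat}|\mathcal{N}(d,k-1)|^{k/(k-1)}\Phi(d/X)$.

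The proof then reduces to the two bounds $\text{LHS}\gg X(\log X)^{k(k+1)/2}$ and $S_2\ll X(\log X)^{k(k+1)/2}$, from which $S_1\ge (\text{LHS})^k/S_2^{k-1}\gg X(\log X)^{k(k+1)/2}$. For the first, I expand $\mathcal{N}(d,k-1)$ into a Dirichlet polynomial $\sum_n b(n)\chi_{8d}(n)$, apply the twisted first moment asymptotic of \cite{HH21} to each $n\le X^\vartheta$, and reassemble the main terms into an Euler product; the displayed arithmetic input then produces the factor $(\log X)^{k(k+1)/2}$ with a positive constant. For the second, following \cite{HS20}, the non-integer exponent $k/(k-1)$ is handled by dominating $|\mathcal{N}_j(d,k-1)|^{k/(k-1)}$ pointwise by a non-negative Dirichlet polynomial of slightly longer length, reducing $S_2$ to even integer moments in $\chi_{8d}$ that are evaluated by the orthogonality $\sum_d\chi_{8d}(n)\Phi(d/X)\sim X\cdot\mathbf{1}_{n=\square}$ together with a matching Euler-product expansion.

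The main obstacle is quantitative: the admissible range for the auxiliary twist $n$ in the first moment of \cite{HH21} is considerably more restrictive than its $\GL(2)$ counterpart, because the analytic conductor of the $\GL(3)$ $L$-function is larger. This forces $\vartheta$ to be strictly smaller than the error-saving exponent of \cite{HH21}, so the partition $(X_j)$ and the cutoffs $(\ell_j)$ must be chosen with care to still cover enough primes that the Euler-product expansion produces the full $\log$-power. A secondary technicality is verifying that the diagonal contribution $n=\square$ in the expansion of $\mathcal{N}(d,k-1)$ generates exactly the symplectic exponent $k(k+1)/2$ (rather than the unitary $k^2$ or the orthogonal $k(k-1)/2$), which depends on the precise shape of the main term in \cite{HH21} together with the evaluation $\chi_{8d}(p^2)=1$ for $p\nmid 8d$.
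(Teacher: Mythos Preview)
Your plan is essentially the paper's proof: the same Heap--Soundararajan mollifier $\mathcal{N}(d,k-1)$, the same H\"older splitting with exponents $k$ and $k/(k-1)$, the twisted first moment of \cite{HH21} for the cross term, and the diagonal $n=\square$ evaluation for the pure mollifier moment.

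There is, however, a genuine error in your intermediate exponents, and as written the argument would not close. The mollifier $\mathcal{N}(d,k-1)$ approximates only the \emph{prime part} of $L(\tfrac12)^{k-1}$, so neither the cross term nor $S_2$ has size $X(\log X)^{k(k+1)/2}$. Carrying out the Euler product one actually obtains
\[
\sum_{2\nmid d}\nolimits^{\flat} L(\tfrac12,\phi\otimes\chi_{8d})\,\mathcal{N}(d,k-1)\,\Phi\!\left(\frac{d}{X}\right)\gg X(\log X)^{(k^2+1)/2},
\qquad
S_2\ll X(\log X)^{k^2/2}:
\]
in the cross term the local factor at $p$ is $1+\bigl((k-1)+\tfrac{(k-1)^2}{2}\bigr)\tfrac{A(p,1)^2}{p}+\cdots=1+\tfrac{k^2-1}{2}\,\tfrac{A(p,1)^2}{p}+\cdots$, with one further factor $\log X$ supplied by the symplectic first-moment main term of \cite{HH21}; in $S_2$ the diagonal gives local factor $1+\tfrac{k^2}{2}\,\tfrac{A(p,1)^2}{p}+\cdots$. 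Thus your claimed lower bound on the cross term is too optimistic for $k>1$ and cannot be established, while your upper bound on $S_2$, though true, is too crude to compensate. The corrected bounds are exactly Propositions~\ref{pp1} and~\ref{pp2} of the paper, and then $k\cdot\tfrac{k^2+1}{2}-(k-1)\cdot\tfrac{k^2}{2}=\tfrac{k(k+1)}{2}$ recovers the target exponent. Once you fix this bookkeeping the proof coincides with the paper's.
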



Our result shows that the family of quadratic twisted $L$-functions $L(s,\phi\otimes \chi_{8d})$ with $\phi$ self-dual is a symplectic family, which is the same as the family of  quadratic Dirichlet $L$-functions $L(s,\chi_{8d})$.
Note that the family of quadratic twists of a given $\GL(2)$ newform $f$, $L(s, f\otimes \chi_{8d})$,  is an orthogonal family as proved in \cite{RS05,RS06}.
If $\phi$ is a non self-dual Hecke--Maass cusp form of $\SL(3,\mathbb{Z})$, then our result in \cite[Theorem 1.3]{HH21} implies that the  family of quadratic twisted $L$-functions $L(s,\phi\otimes \chi_{8d})$  is an orthogonal family as the $\GL(2)$ case.

\section{Preliminaries}

Let $N,M$  be sufficiently large natural numbers depending on $k$ only.
Let $\{\ell_j\}|_{j=1}^R$ be a sequence of decreasing even integers, where $\ell_1= 2\lceil N \log \log X\rceil$ and $\ell_{j+1} = 2 \lceil N \log \ell_j \rceil$ for $j\geq 1$, and $R$ is the largest integer satisfying $\ell_R >10^M$.
Assume that $M$  is large enough so that $\ell_{j-1} >\ell_{j}^2$ for all $2 \leq j \leq R$.
We will use the following two inequalities
\begin{equation*}
R \ll \log \log \ell_1 \quad \mbox{and} \quad \sum^R_{j=1}\frac 1{\ell_j} \leq \frac 2{\ell_R}.
\end{equation*}

Let ${P}_1$ be the set of primes in interval $[3, X^{1/\ell_1^2}]$ and
${P_j}$ be the set of primes in interval $(X^{1/\ell_{j-1}^2}, X^{1/\ell_j^2}]$ for $2\le j\le R$.
We define
\begin{equation*}
{\mathcal P}_j(d)=\sum_{p\in P_j}\frac{A(p,1)}{\sqrt{p}}\chi_{8d}(p),
\end{equation*}
$E_{\ell}(x)=\sum_{j=0}^{\ell} \frac{x^{j}}{j!}$,
${\mathcal N}_j(d,\alpha)=E_{\ell_j}(\alpha{\mathcal P}_j(d))$,
and
\begin{equation*}
\mathcal{N}(d,\alpha)=\prod_{j=1}^{R}
{\mathcal N}_j(d,\alpha)
=\prod_{j=1}^{R}E_{\ell_j}(\alpha{\mathcal P}_j(d)), \quad(\alpha\in \mathbb{R}).
\end{equation*}
Note that ${\mathcal P}_j(d)$
are all real for self-dual form $\phi$, since $A(p,1)\in\mathbb{R}$.
For $l> 0$ even and $x\in \mathbb{R}$,
then we can choose $x_0\neq 0$ which is a local minimum of $E_l$ because $E_l(\pm \infty)=+\infty$ and $E_l'(0)=E_{l-1}(0)=1$,
thus $E_l'(x_0)=E_{l-1}(x_0)=0$,
and we know $E_l(x_0)=\frac{x_0^l}{l!}+E_{l-1}(x_0)>0$.
Together with $E_l(0)=1$,
we have $E_l(x)$ must be positive for any even integer $l\geq 0$ and $x\in \mathbb{R}$.
This is a part of \cite[Lemma 1]{RS15}.
Hence we know ${\mathcal N}_j(d,\alpha)$ is always positive in our case.

The case $k=1$ in  Theorem \ref{thm1} is a simple consequence of our Lemma \ref{lm:mysum} below with $l=1$ which we proved in \cite{HH21}. From now on, we assume $k>1$.
To prove Theorem \ref{thm1}, we need the following propositions.
By the H\"older's inequality, we know
\begin{multline*}
\sum_{2\nmid d}\nolimits^\flat L(\frac{1}{2},\phi\otimes\chi_{8d}){\mathcal N}(d,k-1)\Phi(\frac{d}{X}) \\
\leq(\sum_{2\nmid d}\nolimits^\flat |L(\frac{1}{2},\phi\otimes \chi_{8d})|^{k}\Phi(\frac{d}{X}))
^{\frac{1}{k}}(\sum_{2\nmid d}\nolimits^\flat {\mathcal N}(d,k-1)^{\frac{k}{k-1}}\Phi(\frac{d}{X}))
^{\frac{k-1}{k}}
\end{multline*}
when $k>1$. So by Propositions \ref{pp1} and \ref{pp2} we prove Theorem \ref{thm1}.

\begin{proposition}\label{pp1}
When $k>1$, we have
\begin{equation*}
\sum_{2\nmid d}\nolimits^\flat L(\frac{1}{2},\phi\otimes \chi_{8d}){\mathcal N}(d,k-1)\Phi(\frac{d}{X})\gg X(\log X)^{\frac{k^2+1}{2}}.
\end{equation*}
\end{proposition}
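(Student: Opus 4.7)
Following the Rudnick--Soundararajan \cite{RS05,RS06} / Heap--Soundararajan \cite{HS20} strategy, I would expand the mollifier $\mathcal N(d,k-1)$ into a short Dirichlet polynomial in $\chi_{8d}$, apply the twisted first moment from \cite{HH21} termwise, and analyze the resulting main term as an Euler product over the small primes. The target exponent $(k^2+1)/2$ splits cleanly as $1$ (from the first moment) plus $(k^2-1)/2$ (from the Euler product).

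The first step is to expand, via the multinomial theorem,
$$
\mathcal N(d,k-1)=\prod_{j=1}^R E_{\ell_j}\bigl((k-1)\mathcal P_j(d)\bigr)=\sum_{l}\frac{c(l)}{\sqrt{l}}\chi_{8d}(l),
$$
where $l$ is supported on primes in $\bigcup_{j=1}^R P_j$ with multiplicity at each $p\in P_j$ bounded by $\ell_j$, and $c(l)=\prod_{p^{m_p}\parallel l}(k-1)^{m_p}A(p,1)^{m_p}/m_p!$. Since primes in $P_j$ are bounded by $X^{1/\ell_j^2}$, the support satisfies $l\le X^{\sum_j 1/\ell_j}\le X^{2/\ell_R}\le X^\epsilon$ for any fixed $\epsilon>0$ once $M$ is large. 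I then apply Lemma~\ref{lm:mysum} to each piece $\sum_{d}^\flat L(1/2,\phi\otimes\chi_{8d})\chi_{8d}(l)\Phi(d/X)$, which produces a main term $X\log X\cdot\mathcal M(l)$ plus an error polynomial in $l$; the $\log X$ arises from the simple pole of $L(s,\sym^2\phi)$ at $s=1$, which enters through the divisor-like sum $\sum_j A(l^*j^2,1)/j$ generated when the approximate functional equation forces $nl$ to be a square. Because Lemma~\ref{lm:mysum} saves a fixed positive power of $X$ and the Dirichlet polynomial has length $X^\epsilon$, the total error is $o(X(\log X)^{(k^2+1)/2})$.

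The second step is to regroup the main term by primes. Writing $l=l^*l_\square^2$ with $l^*$ squarefree, one has $\mathcal M(l)\sim A(l^*,1)/\sqrt{l^*}$ up to convergent local corrections, so combined with $c(l)/\sqrt{l}$ the sum factors as $X\log X\cdot C\prod_{p\in\bigcup_jP_j}\mathcal L_p$ with $C>0$ built out of $\Res_{s=1}L(s,\sym^2\phi)$. A short calculation of each local factor, combining the $m_p=1$ contribution (which pairs $(k-1)A(p,1)/\sqrt p$ with $A(p,1)/\sqrt p$ from the squarefree part) and the $m_p=2$ contribution (which pairs $(k-1)^2A(p,1)^2/(2p)$ with the constant at $l^*=1$), gives
$$
\mathcal L_p=1+\Bigl((k-1)+\tfrac{(k-1)^2}{2}\Bigr)\frac{A(p,1)^2}{p}+O\bigl(p^{-2+4\theta_3}\bigr)=1+\frac{k^2-1}{2}\cdot\frac{A(p,1)^2}{p}+O\bigl(p^{-2+4\theta_3}\bigr),
$$
the error being summable over $p$ since $\theta_3\le 5/14$. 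Using the Rankin--Selberg estimate $\sum_{p\le Y}A(p,1)^2/p=\log\log Y+O(1)$ (a consequence of the simple pole of $L(s,\phi\times\phi)=L(s,\sym^2\phi)\zeta(s)$ at $s=1$ for self-dual $\phi$), the product evaluates to $\asymp(\log X)^{(k^2-1)/2}$, and together with the $\log X$ from the first moment yields $\gg X(\log X)^{(k^2+1)/2}$, as required.

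The main obstacle is keeping the first-moment error in check: it depends polynomially on $l$, so one must confirm that summing it against the Dirichlet polynomial coefficients still beats the target $X(\log X)^{(k^2+1)/2}$; this is exactly what the very narrow prime windows $(X^{1/\ell_{j-1}^2},X^{1/\ell_j^2}]$ are designed for, since they make the total length $X^{2/\ell_R}$ an arbitrarily small power of $X$. A secondary technicality, standard from \cite{HS20}, is replacing the truncated exponential $E_{\ell_j}$ by the full Taylor series inside each local factor; this is justified by the factorial decay of the tail together with $\ell_j\gg\log\ell_{j-1}\gg\log\log X$, which makes the truncation error negligible compared to the leading $(\log X)^{(k^2-1)/2}$ contribution.
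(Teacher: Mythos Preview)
Your plan is essentially the paper's proof: expand $\mathcal N(d,k-1)$ as a short Dirichlet polynomial, insert Lemma~\ref{lm:mysum} termwise, and evaluate the resulting sum as an Euler product with local factor $1+\frac{k^2-1}{2}\frac{A(p,1)^2}{p}+O(p^{-2+4\theta_3})$, then apply Lemma~\ref{lm:sum} to extract $(\log X)^{(k^2-1)/2}$; the Rankin trick you allude to for replacing $E_{\ell_j}$ by the full exponential is exactly what the paper does. The one step you pass over is that the main term in Lemma~\ref{lm:mysum} carries $\log\frac{X}{(l)_1^{2/3}}$ rather than $\log X$, so after summing in $l$ there is a subtracted piece $\frac{2}{3}\sum_l\frac{c(l)G(l)}{\sqrt{l(l)_1}}\log(l)_1$ (the paper's $S_2$) which is not part of the ``error polynomial in $l$'' and must be shown small relative to the $\log X$ piece $S_1$; the paper expands $\log(l)_1=\sum_{p\mid (l)_1}\log p$ and uses $\sum_{p\le X^{1/\ell_R^2}}\frac{A(p,1)^2\log p}{p}\ll\ell_R^{-2}\log X$, though your observation $\log l\le\frac{2}{\ell_R}\log X$ combined with a pass to absolute values in the Euler product would also dispose of it.
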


\begin{proposition}\label{pp2}
When $k>1$, we have
\begin{equation*}
\sum_{2\nmid d}\nolimits^\flat {\mathcal N}(d,k-1)^{\frac{k}{k-1}}\Phi(\frac{d}{X})
\ll X(\log X)^{\frac{k^2}{2}}.
\end{equation*}
\end{proposition}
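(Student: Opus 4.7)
The proof follows the blueprint of Heap--Soundararajan \cite{HS20} and its quadratic-twist adaptation by Gao--Zhao \cite{GZ21}, modified to accommodate the $\GL(3)$ Hecke coefficients $A(p,1)$. Since each $E_{\ell_j}$ is positive, $\mathcal{N}(d,k-1)^{k/(k-1)}$ is well defined, but the non-integer exponent $c:=k/(k-1)$ must be removed before averaging in $d$, and this drives the structure of the argument.

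\emph{Splitting into good/bad sets.} Fix a large constant $C_k$ depending only on $k$. Let $\mathcal{B}$ be the set of odd squarefree $d\asymp X$ for which $|\mathcal{P}_j(d)|>\ell_j/C_k$ for some $j$, and let $\mathcal{G}$ be its complement. On $\mathcal{B}$ one uses the trivial bound $E_{\ell_j}(x)\le e^{|x|}$ together with high even-moment estimates for $\mathcal{P}_j(d)$: the $2m$-th moment over $d$ is computed via quadratic orthogonality $\sum_{d}^\flat\chi_{8d}(n)\Phi(d/X)=O(X\mathbf{1}_{n=\square})+\text{(small off-diagonal)}$, and the diagonal is bounded in an Euler-product fashion by $\ll X\bigl(m\sum_{p\in P_j}A(p,1)^2/p\bigr)^m$. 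Taking $m$ comparable to $\ell_j$ and applying Markov shows that the $\mathcal{B}$-contribution is negligible.

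\emph{Approximation on $\mathcal{G}$.} For $|x|\le\ell_j/C_k$ the Taylor tail satisfies $|E_{\ell_j}(x)-e^x|\ll (e/C_k)^{\ell_j}$, so multiplicatively $E_{\ell_j}(x)=e^x(1+O(e^{-\ell_j/2}))$; raising to the power $c$ and taking the product over $j$ gives
\[
\mathcal{N}(d,k-1)^{k/(k-1)}\le e^{k\sum_{j}\mathcal{P}_j(d)}\bigl(1+o(1)\bigr)\le \prod_{j=1}^R E_{L_j}\bigl(k\mathcal{P}_j(d)\bigr)\bigl(1+o(1)\bigr),
\]
where $L_j=\lceil c_k\ell_j\rceil$ and the second inequality applies the reverse approximation $e^y\le E_{L_j}(y)(1+e^{-L_j/2})$, valid in the same range by positivity of $E_{L_j}$. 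The right-hand side is a Dirichlet polynomial in $\chi_{8d}$ supported on integers $n$ of size at most $\prod_j X^{L_j/\ell_j^2}=X^{o(1)}$, well below $X$.

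\emph{Evaluation of the averaged Dirichlet polynomial.} Expanding each $E_{L_j}(k\mathcal{P}_j(d))$ and integrating $\chi_{8d}$ against $\Phi(d/X)$ by quadratic orthogonality, only diagonal indices $n=m^2$ contribute up to a power-saving error. The combinatorial count of pairings in $\mathcal{P}_j(d)^{2r}$ produces $A(p,1)^2/p$ factors with the standard Gaussian multinomial weight $(2r)!/(r!\,2^r)$, so the main term collapses to
\[
c_\Phi\,X\,\prod_{j=1}^R \exp\!\Bigl(\tfrac{k^2}{2}\sum_{p\in P_j}\frac{A(p,1)^2}{p}\Bigr)\bigl(1+o(1)\bigr).
\]
Since $\phi$ is self-dual, $L(s,\phi\times\phi)=\zeta(s)L(s,\sym^2\phi)$ has a simple pole at $s=1$, giving $\sum_{p\le Y}A(p,1)^2/p=\log\log Y+O(1)$; summing over the ranges $P_j$ yields the total exponent $\tfrac{k^2}{2}\log\log X+O(1)$, which is the claimed bound $X(\log X)^{k^2/2}$. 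The principal obstacle is the second step: cleanly converting the non-integer power $c$ into a Dirichlet polynomial of length $X^{o(1)}$ while controlling the multiplicative errors accumulated over $R\ll\log\log\log X$ factors, and simultaneously quantifying the smallness of $\mathcal{B}$ so that its crude bound does not swamp the main term.
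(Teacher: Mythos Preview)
Your overall architecture is correct and matches the paper: reduce $\mathcal{N}_j(d,k-1)^{k/(k-1)}$ to a short Dirichlet polynomial in $\chi_{8d}$, average via quadratic orthogonality (the paper's Lemma~\ref{prsum}), and evaluate the diagonal using $\sum_{p\le Y}A(p,1)^2/p=\log\log Y+O(1)$. The good-set approximation $E_{\ell_j}((k-1)\mathcal P_j)^{k/(k-1)}\approx E_{L_j}(k\mathcal P_j)$ and the subsequent Euler-product calculation are exactly what the paper does.

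The gap is in your treatment of $\mathcal B$. Markov/Chebyshev on high moments of $\mathcal P_j(d)$ bounds only $|\mathcal B_j|$, not $\sum_{d\in\mathcal B}\mathcal N(d,k-1)^{k/(k-1)}\Phi(d/X)$; for the latter you still need a pointwise upper bound on $\mathcal N(d,k-1)^{k/(k-1)}$ over $\mathcal B$, and the ``trivial bound $E_{\ell_j}(x)\le e^{|x|}$'' is useless here because $|\mathcal P_j(d)|$ has no usable deterministic bound (the crude estimate $\mathcal N_j\le X^{O(1/\ell_j)}$ gives a total $X^{O(1/\ell_R)}$, which swamps the savings $2^{-c\ell_R}$ from Chebyshev since $\ell_R^2\ll(\log\log X)^2\ll\log X$). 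The paper avoids a set decomposition altogether by proving the pointwise inequality (its Lemma~\ref{lmNj})
\[
\mathcal N_j(d,k-1)^{\frac{k}{k-1}}\;\le\;\frac{(1+e^{-\ell_j})^{\frac{k}{k-1}}}{1-e^{-\ell_j}}\,\mathcal N_j(d,k)\;+\;\Bigl(\frac{12k^2\mathcal P_j(d)}{\ell_j}\Bigr)^{2r_k\ell_j},\qquad r_k=\Bigl\lceil\tfrac{k}{2k-2}\Bigr\rceil,
\]
valid for \emph{every} $d$: the first term dominates when $|\mathcal P_j(d)|\le\ell_j/(10k)$, and the second (a polynomial, not an exponential) dominates otherwise. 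Multiplying over $j$ and expanding gives a single Dirichlet polynomial of length $X^{o(1)}$; after orthogonality, the $\mathcal N_j(d,k)$ factors produce the main term $\exp(\tfrac{k^2}{2}\sum_{p\in P_j}A(p,1)^2/p)$, while each high-power factor contributes $\ll e^{-\ell_j}$ times that, using $\sum_{p\in P_j}A(p,1)^2/p\le 2\ell_j/N$. Replacing your good/bad split by this pointwise inequality closes the gap with no further change to your argument.
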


To prove the above two propositions we will use the following Lemma.
\begin{lemma}[\cite{LWY05}]\label{lm:sum}
For normalized self-dual Hecke--Maass cusp form $\phi$ for $\SL(3,\mathbb{Z})$ with Fourier coefficients $A(m,n)$,
we have
\begin{equation*}
\sum_{p\leq x}\frac{A(p,1)^2}{p}=\log\log x+O_\phi(1).
\end{equation*}
\end{lemma}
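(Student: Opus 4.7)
The plan is to deduce this asymptotic from the analytic properties of the Rankin--Selberg $L$-function $L(s,\phi\times\phi)$. Because $\phi$ is self-dual and cuspidal on $\GL(3)$, this is a degree-nine $L$-function with a simple pole at $s=1$ and a standard zero-free region to its left. A direct computation with the Euler factor
\begin{equation*}
L_p(s,\phi\times\phi)=\prod_{i,j=1}^{3}\bigl(1-\alpha_i(p)\overline{\alpha_j(p)}\,p^{-s}\bigr)^{-1},
\end{equation*}
where $\alpha_1(p),\alpha_2(p),\alpha_3(p)$ are the Satake parameters of $\phi$, shows that the coefficient of $p^{-s}$ in $\log L(s,\phi\times\phi)$ is $|\alpha_1(p)+\alpha_2(p)+\alpha_3(p)|^2=A(p,1)^2$. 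The divergence of $\sum_p A(p,1)^2/p$ should therefore be governed by the pole of $L(s,\phi\times\phi)$ at $s=1$.

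The first step is the prime number theorem for $L(s,\phi\times\phi)$, which is the content of \cite{LWY05}. Applying Perron's formula to $-L'/L(s,\phi\times\phi)$, shifting past the simple pole at $s=1$, and using the zero-free region together with convexity-type bounds on vertical lines yields
\begin{equation*}
\theta_\phi(x):=\sum_{p\leq x}A(p,1)^2\log p=x+O\bigl(x/(\log x)^{A}\bigr)
\end{equation*}
for every fixed $A>0$. The pole residue supplies the main term $x$, while the prime-power contribution coming from the full von Mangoldt function attached to $L(s,\phi\times\phi)$ is absorbed into the error term via the averaged Rankin--Selberg bound $\sum_{n\leq y}|A(n,1)|^2\ll y$.

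The second step is a routine Abel summation. Writing
\begin{equation*}
\sum_{p\leq x}\frac{A(p,1)^2}{p}=\frac{\theta_\phi(x)}{x\log x}+\int_{2}^{x}\theta_\phi(t)\,\frac{\log t+1}{t^2\log^2 t}\,dt,
\end{equation*}
and substituting $\theta_\phi(t)=t+O(t/(\log t)^{A})$, the main integral evaluates to $\int_2^{x}(t\log t)^{-1}\,dt+O(1)=\log\log x+O(1)$; the error integral contributes $O(1)$ and the boundary term is $O(1/\log x)$. This proves the lemma.

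The main obstacle is the first step, namely assembling a standard zero-free region, convexity-type bounds for $L'/L(s,\phi\times\phi)$ in vertical strips, and control of the prime-power tail for this degree-nine Rankin--Selberg $L$-function. These are precisely the analytic inputs supplied in \cite{LWY05}; once they are in place, the contour shift and the subsequent Abel summation are entirely standard.
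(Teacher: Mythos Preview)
The paper does not supply its own proof of this lemma: it is simply quoted from \cite{LWY05} and used as a black box. Your sketch is a faithful outline of the argument behind that citation. Liu--Wang--Ye establish Selberg orthogonality for pairs of automorphic $L$-functions, and the diagonal case $\pi=\pi'=\phi$ specializes exactly to the asymptotic $\sum_{p\le x}A(p,1)^2/p=\log\log x+O_\phi(1)$. Their input is precisely the analytic package you describe (pole of $L(s,\phi\times\phi)$ at $s=1$, zero-free region, control of prime-power terms), followed by partial summation; so your proposal and the source agree in substance. One minor remark: your prime-number-theorem formulation with saving $(\log x)^{-A}$ is stronger than strictly necessary here---an estimate $\theta_\phi(x)=x+o(x)$, or even the Mertens-type statement proved directly in \cite{LWY05}, already suffices for $\log\log x+O(1)$ after Abel summation.
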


\section{Proof of Proposition \ref{pp1}}

For convenience we write an integer $n=(n)_1(n)_2^2$ with $(n)_1$ is square-free.
We would prove Proposition \ref{pp1} with the help  of the following lemma.
\begin{lemma}[\cite{HH21}]\label{lm:mysum}
Let $\phi$ be a self-dual Hecke--Maass cusp form of $\operatorname{SL}(3,\mathbb{Z})$ with normalized Fourier coefficients $A(m,n)$.
For sufficiently large $X>0$ and odd $l\ll X^{\frac{1}{10}-\varepsilon}$ with arbitrarily small $\varepsilon >0$, and any smooth nonnegative Schwarz class function $\Phi$ supported in the interval $(1,2)$, we have
\begin{multline*}
\sum_{2\nmid d}\nolimits^\flat L(\frac{1}{2},\phi\otimes \chi_{8d})\chi_{8d}(l)\Phi(\frac{d}{X})
= c\check{\Phi}(0)\frac{\lim_{s\to 1} (s-1)L^{\{2\}}(s,\sym^2 \phi)}
{\sqrt{(l)_1}}
\\
\times X(G(l)(\log \frac{X}{(l)_1 ^{\frac{2}{3}}}+c_1)+H(l))
+O_{\Phi}(l^{\frac{3}{4}+\varepsilon}
X^{\frac{19}{20}+\varepsilon})
\end{multline*}
where $\hat{\Phi}(x)=\int_{-\infty}^\infty \Phi(t)e(-xt)dt$, $\Phi_{(3)}=\max_{0\leq j \leq 3}\int_1^2|\Phi^{(j)}(t)|dt$,
$$G(l)=\prod_{\textrm{odd prime }p}G_p(l)$$
with
\begin{equation*}
  G_p(p^a M)=G_p(p^a)=\left\{\begin{aligned}
  &1+O(p_2^{-2+2\theta_3+\varepsilon}),\quad &2\mid a,\\
  &A(p_1,1)+O(p^{-1+2\theta_3+\varepsilon}),\quad
  &2\nmid a,
  \end{aligned}
\right.
\end{equation*}
for $(M,p)=1$, with effective $O$-constant no more than 35.
$$H(l)=O\Big( \prod_{p|l_1}
     (|A(p_1,1)|+p^{-1+2\theta_3+\varepsilon})
     +\sum_{p\mid l_1}
     p^{-1+2\theta_3}\log p
     \prod_{\substack{
     p_1|l_1\\ p_1\neq p}}
     (|A(p_1,1)|+p^{-1+2\theta_3+\varepsilon})  \Big)
     ,$$
with effective $O$-constant.
And $c,c_1$ are constants depending on $\phi$ and $\Phi$.
\end{lemma}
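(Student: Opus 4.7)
The plan is to establish Lemma \ref{lm:mysum} via a standard, but technically involved, ``opening then dualizing'' strategy for the twisted first moment. I would first apply an approximate functional equation to write
\[
L(\tfrac12,\phi\otimes\chi_{8d}) \;=\; \sum_{n\geq 1} \frac{A(n,1)\chi_{8d}(n)}{\sqrt n}\, V\!\left(\frac{n}{(8d)^{3/2}}\right) \;+\; \text{(dual term, equal by self-duality)},
\]
where $V$ is a smooth cutoff coming from the gamma factors. Inserting this into the $d$-sum, detecting the square-free condition by $\mu^2(d)=\sum_{a^2\mid d}\mu(a)$ and opening the character $\chi_{8d}(ln)=\chi_{8d}((ln)_1)$, the problem reduces to evaluating smooth sums of the form
\[
\sum_{(d,2\ell n a)=1} \chi_{8d}((\ell n)_1)\, \Phi\!\left(\frac{a^2 d}{X}\right).
\]

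Next I would apply Poisson summation in $d$ modulo $8(\ell n)_1$ (as in Soundararajan's treatment of quadratic Dirichlet $L$-values). The zero-frequency term contributes only when $(\ell n)_1$ is a perfect square, so it gives a main term coming from $n=(\ell n)_1^2 m^2 / (\ell)_1$ type configurations; summing the resulting $A(n,1)$ and evaluating carefully yields the pole at $s=1$ of the $\{2\}$-removed symmetric square $L$-function $L^{\{2\}}(s,\sym^2\phi)$ (which exists precisely because $\phi$ is self-dual, by Gelbart--Jacquet). Extracting the $\log X$ out of the relevant Mellin--Barnes contour produces the leading factor $X(G(\ell)\log(X/(\ell)_1^{2/3})+\cdots)$ with the Euler product $G(\ell)=\prod_p G_p(\ell)$, and the local computation of $G_p$ at bad primes dividing $\ell$ yields $H(\ell)$.

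For the non-zero frequencies I would apply the $\GL(3)$ Voronoi summation formula to the sum $\sum_n A(n,1) e(kn/q) W(n)$ (with $q\asymp (\ell n)_1$, $W$ a smooth weight of scale $(8d)^{3/2}$). Voronoi transforms the length-$N$ sum to a dual sum of length about $q^3/N$ with Fourier coefficients $A(n_2,n_1)$ and a Bessel-type kernel. Trivial estimation combined with Deligne's bound on average and the stationary phase analysis of the Bessel transform gives a saving of size $X^{-1/20+\varepsilon}$ after optimization of the cutoff parameter in the approximate functional equation, producing the error $l^{3/4+\varepsilon}X^{19/20+\varepsilon}$.

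The hard part will be the off-diagonal Voronoi analysis: keeping track of the $\ell$-dependence uniformly (needed to obtain the clean $l^{3/4+\varepsilon}$ exponent), treating the ramified primes $p\mid 2\ell$ so that the local factors assemble into the precise $G_p(\ell)$ and $H(\ell)$ with the stated effective $O$-constants, and choosing the truncation length in the approximate functional equation so that the $\GL(3)$ conductor $X^3$ is balanced against the length $X$ of the outer $d$-sum (this is why the saving is only $X^{1/20}$, rather than the square-root saving available in the $\GL(1)$ or $\GL(2)$ analogues). Once these are carried out, combining the diagonal main term with the Voronoi error yields the stated formula.
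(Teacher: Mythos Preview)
This lemma is not proved in the present paper at all: it is quoted verbatim from the authors' earlier work \cite{HH21}, and the paper simply invokes it as a black box in the proof of Proposition~\ref{pp1}. So there is no ``paper's own proof'' to compare against here.

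That said, your sketch is essentially the correct architecture of the argument in \cite{HH21}: approximate functional equation, $\mu^2$-detection of square-freeness, Poisson summation in $d$ \`a la Soundararajan \cite{SD00}, extraction of the main term from the zero frequency via the pole of $L(s,\sym^2\phi)$ at $s=1$, and treatment of the nonzero frequencies by the $\GL(3)$ Voronoi formula. One small correction: you write ``Deligne's bound on average,'' but for a $\GL(3)$ Hecke--Maass form there is no Deligne bound --- Ramanujan is open and one only has $|A(p,1)|\le 3p^{\theta_3}$ with $\theta_3\le 5/14$ (Kim--Sarnak). What is actually used is the Rankin--Selberg bound $\sum_{n\le X}|A(n,1)|^2\ll X$, which plays the role you have in mind. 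With that caveat, your plan is sound and matches the method of \cite{HH21}; the delicate points you flag (uniformity in $\ell$, the local factors at primes dividing $2\ell$ assembling into $G_p(\ell)$ and $H(\ell)$, and the balance of conductor $X^3$ against the $d$-sum of length $X$) are exactly where the work lies.
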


Let $g(n)$ be a multiplicative function with $g(p^m)=m!$.
Let $\Omega(n)$ be the total number of prime factors of  $n$.
Let indicator function $b_j(n)=1$ when all prime factors of $n$ are in $P_j$ and $\Omega(n)\leq \ell_j$, and $b_j(n)=0$ otherwise.
Let $A(m)$ be a completely multiplicative function with $A(p)=A(p,1)$ when $p$ is prime,
so $A(m)=A(m,1)$ when $m$ is square-free.
Thus
\begin{equation*}
{\mathcal N}_j(d,k-1)= \sum_{n_j}\frac{A(n_j)}{\sqrt{n_j}} \frac{(k-1)^{\Omega(n_j)}}{g(n_j)}
b_j(n_j)\chi_{8d}(n_j)
\end{equation*}
for $1\leq j\leq R$.

From Lemma \ref{lm:mysum} we know
\begin{multline*}
\sum_{2\nmid d}\nolimits^\flat L(\frac{1}{2},\phi\otimes \chi_{8d}){\mathcal N}(d,k-1)\Phi(\frac{d}{X})\\
\gg X\sum_{n_1,\dots,n_R}\prod_{j=1}^R\frac{A(n_j)
b_j(n_j)} {\sqrt{n_j(n_j)_1}}\frac{(k-1)^{\Omega(n_j)}}{g(n_j)}\\
\times (G(\prod_{j=1}^R n_j)(\log \frac{X}{((n_1)_1\dots(n_R)_1)^{\frac{2}{3}}}
+c_1)+H(\prod_{j=1}^R n_j)).
\end{multline*}
Here we only concentrate on the terms involving $G(\prod_{j=1}^R n_j)\log \frac{X}{((n_1)_1\dots(n_R)_1)^{\frac{2}{3}}}$.
By using the same argument in the following discussion,
the terms involving $G(\prod_{j=1}^R n_j)c_1,H(n_1\dots n_R)$ contribute as
$O(X(\log X)^{\frac{k^2+1}{2}-1+\varepsilon})$,
then
\begin{equation*}
\begin{split}
&\sum_{2\nmid d}\nolimits^\flat L(\frac{1}{2},\phi\otimes \chi_{8d}){\mathcal N}(d,k-1)\Phi(\frac{d}{X})\\
&\hskip 30pt \gg X\log X\sum_{n_1,\dots,n_R}G(\prod_{j=1}^R n_j)\prod_{j=1}^R\frac
{A(n_j)b_j(n_j)} {\sqrt{n_j(n_j)_1}}\frac{(k-1)^{\Omega(n_j)}}
{g(n_j)}\\
&\hskip 60pt -\frac{2}{3}X\sum_{n_1,\dots,n_R}G(\prod_{j=1}^R n_j)
(\prod_{j=1}^R\frac{A(n_j)b_j(n_j)} {\sqrt{n_j(n_j)_1}}\frac{(k-1)^{\Omega(n_j)}}
{g(n_j)}\log (\prod_{j=1}^R(n_j)_1)\\
&\hskip 90pt +O(X(\log X)^{\frac{k^2+1}{2}-1+2\theta_3+\varepsilon})\\
&\hskip 30pt =S_1-\frac{2}{3}S_2
+O(X(\log X)^{\frac{k^2+1}{2}-1+2\theta_3+\varepsilon}).
\end{split}
\end{equation*}

Evidently all $n_j$ in $S_1$  satisfy that $\Omega(n_j)\leq \ell_j$.
Removing the restriction for $\Omega(n_j)$ we get
\begin{equation*}
\begin{split}
S_1\geq X\log X\prod_{j=1}^R\Big (\prod_{p_j\in P_j}
(&\sum_{i=0}^{\infty}\frac{A(p_j)^{2i}G_{p_j}
(p_j^{2i})} {p^{i}}\frac{(k-1)^{2i}}{(2i)!}
+\sum_{i=0}^{\infty}\frac{A(p_j)^{2i+1}G_{p_j}
(p_j^{2i+1})} {p^{i+1}}\frac{(k-1)^{2i+1}}{(2i+1)!
})\\
& -\sum_{n_j}\frac{|A(n_j)\prod_{p_j\in P_j}G_{p_j}(n_j)|}
{\sqrt{n_j(n_j)_1}}
\frac{(k-1)^{\Omega(n_j)}}{g(n_j)} 2^{\Omega(n_j)-\ell_j}\Big )\prod_{p>X^{\frac{1}{\ell_R^2}}}G_p(1).
\end{split}
\end{equation*}
Note that $2^{\Omega(n_j)-\ell_j}> 1$ when $\Omega(n_j)>\ell_j$.
Hence we have
\begin{equation*}
\begin{split}
S_1\geq &\frac{X\log X}{\zeta(2-2\theta_3-0.01)^{35}} \prod_{j=1}^R\Bigg(\prod_{p_j\in P_j}
\Big(1-(\frac{(k-1)^2}{2}+k-1)\frac{A(p_j)^2}{p_j}
\Big)
^{-1}(1+O(\frac{|A(p_j)|+1}{p_j^{2-2\theta_3}}))\\
&\hskip 30pt -2^{-\ell_j}\prod_{p_j\in P_j}\Big(1+\sum_{i=1}^{\infty}\frac{A(p_j)^{2i}
G_{p_j}(p_j^{2i})} {p_j^{i}}\frac{(k-1)^{2i}2^{2i}}{(2i)!}
\\
&\hskip 60pt
+\sum_{i=0}^{\infty}\frac{A(p_j)^{2i+1}G_{p_j}(p_j^{2i+1})} {p_j^{i+1}}\frac{(k-1)^{2i+1}2^{2i+1}}{(2i+1)!
}\Big)\Bigg )\\
&\geq \frac{X\log X}{\zeta(2-2\theta_3-0.01)^{35}} \prod_{j=1}^R\Bigg (\exp\Big(\frac{k^2-1}{2}
\sum_{p_j\in P_j}(\frac{A(p_j,1)^2}{p_j}+O(\frac{|A(p_j,1)|}
{p_j^{2-2\theta_3}}))\Big)\\
&\hskip 30pt -2^{-\ell_j}\exp\Big((2(k-1)^2+2(k-1))\sum_{p_j\in P_j}(\frac{A(p_j,1)^2}{p_j}+O(\frac{|A(p_j,1)|}
{p_j^{2-2\theta_3}}))\Big) \Bigg ),
\end{split}
\end{equation*}
because $1+x\leq \exp(x)$ for $x\in\mathbb{R}$.
In our setting $M$ is large enough such that any $\ell_j$ is large,
and $N$ is large enough such that
\begin{equation}\label{doueq}
\frac{\ell_j}{4N}\leq \sum_{p_j\in P_j}\frac{A(p_j,1)^2}{p_j}\leq \frac{2\ell_j}{N}
\end{equation}
for every $j$ from Lemma \ref{lm:sum}.
Thus
\begin{multline*}
2^{-\ell_j}\exp\Big((2(k-1)^2+2(k-1))\sum_{p_j\in P_j}(\frac{A(p_j,1)^2}{p_j}+O(\frac{|A(p_j,1)|}
{p_j^{2-2\theta_3}}))\Big)\\
\leq 2^{-\frac{\ell_j}{2}}\exp\Big(\frac{k^2-1}{2}
\sum_{p_j\in P_j}(\frac{A(p_j,1)^2}{p_j}+O(\frac{|A(p_j,1)|}
{p_j^{2-2\theta_3}}))\Big).
\end{multline*}
So we get
\begin{equation}\label{eqn:S1>}
  S_1\geq \frac{X\log X}{\zeta(2-2\theta_3-0.01)^{35}} \prod_{j=1}^R\Bigg ((1-2^{-\frac{\ell_j}{2}})\exp\Big(\frac{k^2-1}
  {2}\sum_{p_j\in P_j}(\frac{A(p_j,1)^2}{p_j}+O(\frac{|A(p_j,1)|}
{p_j^{2-2\theta_3}}))\Big)\Bigg ).
\end{equation}

Now we estimate $S_2$.
Consider the primes dividing $\prod_{i=1}^R (n_i)_1$.
Let indicator function $\tilde{b}_{i,l}(n_i)=b_i(n_ip^l)$ when $b_i(p)\neq 0$ and $\tilde{b}_{i,l}(n_i)=b_i(n_i)$ otherwise, we have
\begin{equation*}
\begin{split}
S_2&\leq X\sum_{p\in \cup P_j}(\sum_{l\geq 0}\frac{A(p)^{2l+1}G_p(p^{2l+1})\log p}{p^{l+1}}\frac{(k-1)^{2l+1}}{(2l+1)!})\\
& \hskip 30pt \times
\prod_{i=1}^R (\sum_{(n_i,p)=1}\frac{A(n_i)\prod_{p\neq p_i\in R_i} G_{p_i}(n_i)}
{\sqrt{n_i(n_i)_1}}
\frac{(k-1)^{\Omega(n_i)}\tilde{b}_{i,l}(n_i)}
{g(n_i)})\prod_{p>X^{\frac{1}{\ell_R^2}}}G_p(1)\\
&=:X\prod_{p>X^{\frac{1}{\ell_R^2}}}G_p(1)
\sum_{p\in \cup P_j}S_p.
\end{split}
\end{equation*}

When $p\in P_{i_0}$, we know
\begin{equation*}
\begin{split}
S_p=&\prod_{\substack{i=1\\ i\neq i_0}}^{R}
(\sum_{n_i}\frac{A(n_i)b_i(n_i)\prod_{p_i\in P_i}G_{p_i}(n_i)}
{\sqrt{n_i(n_i)_1}}
\frac{(k-1)^{\Omega(n_i)}}{g(n_i)})
(\sum_{l\geq 0}\frac{A(p)^{2l+1}G(p^{2l+1})\log p}{p^{l+1}}\frac{(k-1)^{2l+1}}{(2l+1)!})\\
&\hskip 30pt \times
 (\sum_{(n_{i_0},p)=1}\frac{A(n_{i_0})
 \tilde{b}_{i_0,l}(n_{i_0})\prod_{p\neq p_i\in P_{i_0}}G_{p_i}(n_{i_0})}
{\sqrt{n_{i_0}(n_{i_0})_1}}
\frac{(k-1)^{\Omega(n_{i_0})}}
{g(n_{i_0})}).
\end{split}
\end{equation*}
Removing the restriction for $\tilde{b}_{i,l}$ on $\Omega(n_i)$, just as our discussion above, we have
\begin{equation*}
\begin{split}
&\sum_{(n_i,p)=1}\frac{A(n_i)b_i(n_i)
\prod_{p_i\in P_i}G_{p_i}(n_i)}
{\sqrt{n_i(n_i)_1}}
\frac{(k-1)^{\Omega(n_i)}}{g(n_i)}\\
&\leq \prod_{\substack{q\in P_i\\(p,q)=1}}\Big(\sum_{m=0}^{\infty}
\frac{A(q)^{2m}G_q(q^{2m})}{q^m}
\frac{(k-1)^{2m}}{(2m)!}
+\sum_{m=0}^{\infty}\frac{A(q)^{2m+1}G_q(q^{2m+1})} {q^{m+1}}\frac{(k-1)^{2m+1}}{(2m+1)!}\Big)\\
&\leq \exp\Big(\frac{k^2-1}{2}
\sum_{q\in P_i}(\frac{A(q,1)^2}{q}+
O(\frac{|A(q,1)|}{q^{2-2\theta_3}}))\Big),
\end{split}
\end{equation*}
and the error is no more than
\begin{multline*}
\sum_{n_j}\frac{A(n_i)\prod_{p\neq q\in P_i}G_q(n_i)}
{\sqrt{n_i(n_i)_1}}
\frac{(k-1)^{\Omega(n_i)}
2^{l\delta_{i=i_0}+\Omega(n_i)-\ell_i}}{g(n_i)}
\\
\leq 2^{l\delta_{i=i_0}-\frac{\ell_i}{2}}
\exp\Big(\frac{k^2-1}{2}
\sum_{q\in P_i}(\frac{A(q,1)^2}{q}+
O(\frac{|A(q,1)|}{q^{2-2\theta_3}}))\Big).
\end{multline*}
From $|A(p,1)|\leq 3p^{\theta_3}$ it shows that
\begin{equation*}
\begin{split}
S_p&\leq \prod_{i=1}^{R}\exp\Big(\frac{k^2-1}{2}
\sum_{q\in P_i}(\frac{A(q,1)^2}{q}+
O(\frac{|A(q,1)|}{q^{2-2\theta_3}}))\Big)\\
&\hskip 30pt\times
\prod_{\substack{i=1\\ i\neq i_0}}^{R}
(1+2^{-\frac{\ell_i}{2}})
(\sum_{l\geq 0}
\frac{A(p)^{2l+1}G(p^{2l+1})\log p}{p^{l+1}}\frac{(k-1)^{2l+1}}{(2l+1)!}
(1+2^{l-\frac{\ell_i}{2}}))\\
&\leq \prod_{i=1}^{R}\exp\Big(\frac{k^2-1}{2}
\sum_{q\in P_i}(\frac{A(q,1)^2}{q}+
O(\frac{|A(q,1)|}{q^{2-2\theta_3}}))\Big)\\
&\hskip 30pt\times
\prod_{\substack{i=1\\ i\neq i_0}}^{R}
(1+2^{-\frac{\ell_i}{2}})
(\sum_{l\geq 0}\frac{A(p)^{2l+1}G_p(p^{2l+1})\log p}{p^{l+1}}\frac{(k-1)^{2l+1}}{(2l+1)!}
(1+2^{l-\frac{\ell_i}{2}}))\\
&\leq B\frac{A(p)^2\log p}{p}\prod_{i=1}^{R}\exp \Big(\frac{k^2-1}{2}\sum_{q\in P_i}\frac{A(q,1)^2}{q}\Big),
\end{split}
\end{equation*}
where $B$ is a constant depending only on $k$.
Thus
\begin{equation}\label{eqn:S2<}
\begin{split}
S_2&\leq \zeta(2-2\theta_3-0.01)^{35}BX\prod_{i=1}^{R}\exp\Big(\frac{k^2-1}{2}
\sum_{q\in P_i}\frac{A(q,1)^2}{q}\Big)
\sum_{p\in \cup P_j}\frac{A(p,1)^2\log p}{p}\\
&\leq \zeta(2-2\theta_3-0.01)^{35}BX(\frac{\log X}{10^{2M}}+O(1))\exp\Big(\frac{k^2-1}{2}\sum_{q\in P_i}\frac{A(q,1)^2}{q}\Big).
\end{split}
\end{equation}
For $M$ large enough, by \eqref{eqn:S1>} and \eqref{eqn:S2<}, we obtain
\begin{equation*}
S_1-\frac{2}{3}S_2\gg X\log X\prod_{i=1}^{R}\exp\Big((\frac{k^2-1}{2}
)\sum_{q\in P_i}\frac{A(q,1)^2}{q}\Big)\gg X(\log X)^{\frac{k^2+1}{2}},
\end{equation*}
which proves Proposition \ref{pp1}.

\section{Proof of Proposition \ref{pp2}}

In this section we will prove Proposition \ref{pp2} with the help  of the following lemmas.

\begin{lemma}[\cite{SD00}]\label{prsum}
For any odd $n>0$, we have
\begin{equation*}
\sum_{2\nmid d}\nolimits^\flat \chi_{8d}(n)\Phi(\frac{d}{X})=
\delta_{n=\square}{\hat\Phi}(0) \frac{2X}{3\zeta(2)}\prod_{p|n}
(\frac{p}{p+1})+
O(X^{1/2+\varepsilon}\sqrt{n}),
\end{equation*}
where $\delta_{n=\square}=1$ if $n$ is a square and $\delta_{n=\square}=0$ otherwise.
\end{lemma}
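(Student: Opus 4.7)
The plan is to detect the squarefree condition via Möbius inversion, reduce the twisted character sum to arithmetic progressions modulo $8n$, and apply Poisson summation, following Soundararajan's approach in \cite{SD00}.

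First I write $\mu^2(d) = \sum_{a^2 \mid d,\, 2 \nmid a} \mu(a)$ and substitute $d = a^2 b$ with $a,b$ both odd. Since $\chi_{8a^2 b}(n) = 0$ whenever $(a,n) > 1$ and equals $\chi_{8b}(n)$ otherwise, the original sum becomes
\begin{equation*}
\sum_{2\nmid d}\nolimits^\flat \chi_{8d}(n)\,\Phi(d/X)
= \sum_{\substack{a \text{ odd}\\ (a,n)=1}} \mu(a) \sum_{b\text{ odd}} \chi_{8b}(n)\, \Phi(a^2 b/X).
\end{equation*}
Because $\Phi$ is supported in $(1,2)$, the $a$-sum is actually truncated at $a \leq \sqrt{2X}$. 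For each fixed $a$, I split the inner $b$-sum into residues $b \equiv r \pmod{8n}$ with $r$ odd, on which the character value $\chi_{8b}(n) = \chi_{8r}(n)$ is constant.

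Next I apply Poisson summation to each residue-class sum, obtaining
\begin{equation*}
\sum_{b \equiv r\,(8n)} \Phi(a^2 b/X)
= \frac{X}{8na^2} \sum_{k \in \mathbb Z} \hat\Phi\!\left(\tfrac{kX}{8na^2}\right) e\!\left(\tfrac{kr}{8n}\right).
\end{equation*}
The main term comes from $k=0$. Here the inner sum over odd $r \pmod{8n}$ is $\sum_r \chi_{8r}(n)$, a complete character sum on $(\mathbb Z/8n\mathbb Z)^\times$. Viewed as a function of $r$ the symbol $\chi_{8r}(n) = (8/n)(r/n)$ is a Dirichlet character whose conductor divides $n$; by orthogonality this sum vanishes unless $n$ is a perfect square, in which case $\chi_{8r}(n) = 1$ for all $r$ coprime to $2n$ and there are exactly $4n\prod_{p \mid n}(1 - 1/p)$ such residues by the Chinese Remainder Theorem (using $(8,n)=1$). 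Combining with the evaluation
\begin{equation*}
\sum_{\substack{a \text{ odd}\\ (a,n)=1}} \frac{\mu(a)}{a^2}
= \frac{4}{3\zeta(2)}\prod_{p\mid n}\left(1-\tfrac{1}{p^2}\right)^{-1}
\end{equation*}
and the identity $(1-1/p)/(1-1/p^2) = p/(p+1)$ produces exactly the stated main term $\frac{2X}{3\zeta(2)}\hat\Phi(0)\prod_{p \mid n}\frac{p}{p+1}$.

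For the error I bound the $k \neq 0$ frequencies by the Gauss-type sum
\begin{equation*}
G_k(n) := \sum_{\substack{r \bmod 8n\\ 2 \nmid r}} \chi_{8r}(n)\, e\!\left(\tfrac{kr}{8n}\right),
\end{equation*}
which factors across prime powers dividing $8n$ and is bounded by $O(\sqrt{n(k,n)}\,n^{\varepsilon})$ via classical evaluations of quadratic Gauss sums at each local factor. Since $\hat\Phi$ decays faster than any polynomial, the $k$-sum is effectively truncated at $|k| \ll na^2 X^{-1+\varepsilon}$; summing the resulting bound over $a \leq \sqrt{2X}$ and the admissible $k$ then yields the total error $O(X^{1/2+\varepsilon}\sqrt n)$. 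The main obstacle is getting the correct uniform $\sqrt n$-dependence out of the local Gauss sums at primes dividing $n$ and at $p=2$ (where the oddness constraint on $d$ interacts with the factor $8$ inside $\chi_{8d}$); the computation is classical but requires careful bookkeeping of the local factors to separate the clean main-term Euler product $\prod_{p \mid n} p/(p+1)$ from the error.
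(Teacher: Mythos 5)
Your argument is correct and is essentially the paper's own "proof": the paper gives none, simply citing Soundararajan \cite{SD00}, whose corresponding lemma is established by exactly this M\"obius-inversion (to detect square-free $d$), reduction to residue classes modulo $8n$, and Poisson-summation argument, with the $k=0$ frequency producing $\delta_{n=\square}\hat\Phi(0)\frac{2X}{3\zeta(2)}\prod_{p\mid n}\frac{p}{p+1}$ and the nonzero frequencies controlled by quadratic Gauss sums. Your constants check out ($\tfrac{X}{2}\cdot\tfrac{4}{3\zeta(2)}\cdot\prod_{p\mid n}\tfrac{1-1/p}{1-1/p^2}$ gives the stated main term), and the local Gauss-sum bookkeeping you flag at the end is indeed the only delicate point, handled in \cite{SD00} as you outline.
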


\begin{lemma}\label{lmNj}
For any $1\leq j\leq R$, we have
\begin{equation*}
{\mathcal N}_j(d,k-1)^{\frac{k}{k-1}}\leq {\mathcal N}_j(d,k)\frac{(1+e^{-\ell_j})^{\frac{k}{k-1}}}
{(1-e^{-\ell_j})}+(\frac{12k^2{\mathcal P}_j(d)}{\ell_j})^{2r_k\ell_j}.
\end{equation*}
where $r_k=\lceil \frac{k}{2k-2}\rceil$.
\end{lemma}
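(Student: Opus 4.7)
My plan is to split into two regimes based on the size of $|\mathcal{P}_j(d)|$: a small one where $E_{\ell_j}$ closely approximates the true exponential so the first term on the right handles everything, and a large one where the polynomial correction $(12k^2\mathcal{P}_j(d)/\ell_j)^{2r_k\ell_j}$ alone dominates the left-hand side. Note first that $\mathcal{N}_j(d,\alpha) = E_{\ell_j}(\alpha \mathcal{P}_j(d))$ is strictly positive, since $\ell_j$ is even, so the fractional power is well-defined. The preliminary estimate I would use is that for $|y|$ a sufficiently small fraction of $\ell_j$ (for instance $|y| \leq \ell_j/e^2$),
\begin{equation*}
(1-e^{-\ell_j})\,e^y \;\leq\; E_{\ell_j}(y) \;\leq\; (1+e^{-\ell_j})\,e^y,
\end{equation*}
obtained by bounding the Taylor tail $\sum_{n>\ell_j} y^n/n!$ via Stirling. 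In Case 1, where $|\mathcal{P}_j(d)|$ is small enough that both $(k-1)\mathcal{P}_j$ and $k\mathcal{P}_j$ satisfy the hypothesis above, applying the upper estimate to $(k-1)\mathcal{P}_j$ and raising to the $k/(k-1)$-power, then dividing by the lower estimate applied to $k\mathcal{P}_j$, yields
\begin{equation*}
\mathcal{N}_j(d,k-1)^{k/(k-1)} \;\leq\; (1+e^{-\ell_j})^{k/(k-1)}\,e^{k\mathcal{P}_j(d)} \;\leq\; \frac{(1+e^{-\ell_j})^{k/(k-1)}}{1-e^{-\ell_j}}\,\mathcal{N}_j(d,k),
\end{equation*}
which is precisely the first term on the right (the second term, being a real number raised to the even integer power $2r_k\ell_j$, is nonnegative and may be dropped).

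In Case 2, $|\mathcal{P}_j(d)|$ exceeds the threshold, so $M := 12k^2|\mathcal{P}_j(d)|/\ell_j$ is bounded below by a constant $>1$. The goal is to show $\mathcal{N}_j(d,k-1)^{k/(k-1)} \leq M^{2r_k\ell_j}$. The critical structural input is that $r_k = \lceil k/(2(k-1))\rceil$ forces $2r_k \geq k/(k-1)$, so $2r_k\ell_j$ exceeds $k\ell_j/(k-1)$, the ``effective degree'' of $E_{\ell_j}((k-1)\mathcal{P}_j)^{k/(k-1)}$ as a function of $\mathcal{P}_j$. For $(k-1)|\mathcal{P}_j| \geq \ell_j$, I would apply the Stirling-type bound $E_{\ell_j}((k-1)\mathcal{P}_j) \leq (\ell_j+1)\bigl(e(k-1)|\mathcal{P}_j|/\ell_j\bigr)^{\ell_j}$ and exploit that $12k^2/(e(k-1)) \geq 48/e \gg 1$ (with the minimum attained at $k=2$) to absorb the prefactor $(\ell_j+1)^{k/(k-1)}$ into the gap between $2r_k\ell_j$ and $k\ell_j/(k-1)$. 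For $(k-1)|\mathcal{P}_j| < \ell_j$, I would use the uniform bound $E_{\ell_j}((k-1)\mathcal{P}_j) \leq e^{(k-1)|\mathcal{P}_j|}$ (valid for even $\ell_j$ since $E_{\ell_j}(y) \leq E_{\ell_j}(|y|) \leq e^{|y|}$) and reduce the inequality to one of the form $u/\log u \leq 24 k r_k$ with $u = M$ in a bounded interval, which holds thanks to $r_k \geq k/(2(k-1))$ and the lower bound on $u$.

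The main obstacle will be the careful calibration of constants so that the Case 2 analysis works uniformly for $|\mathcal{P}_j(d)|$ lying just above the Case 1/Case 2 boundary: there $M$ is only mildly larger than $1$ so $M^{2r_k\ell_j}$ is not yet exponentially large, yet $E_{\ell_j}((k-1)\mathcal{P}_j)$ can itself be mildly large. To make this match, one takes the Case 1 threshold to be a suitable constant multiple of $\ell_j/k$ (rather than $\ell_j/(12k^2)$), so that in the resulting Case 2 the quantity $M$ is bounded below by a fixed constant strictly greater than $1$; the remaining error terms are then negligible since $\ell_j \geq 10^M$ is very large and $k>1$ is fixed.
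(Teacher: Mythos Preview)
Your proposal is correct and follows the same two-regime strategy as the paper, with the small-$|\mathcal{P}_j(d)|$ case handled identically (the paper uses the threshold $|\mathcal{P}_j(d)|\le \ell_j/(10k)$ and the tail bound $|x|^K/K!\le (e/10)^K\le e^{-|x|-K}$ for $|x|\le K/10$). The difference is in the large-$|\mathcal{P}_j(d)|$ case. You split further into $(k-1)|\mathcal{P}_j|\gtrless \ell_j$ and worry about calibrating constants near the boundary; the paper instead dispatches the whole regime in one line: for $|\mathcal{P}_j|\ge \ell_j/(10k)$ one has $10k^2|\mathcal{P}_j|/\ell_j\ge k>1$, so
\[
\sum_{r=0}^{\ell_j}\frac{|(k-1)\mathcal{P}_j|^r}{r!}
\;\le\; |k\mathcal{P}_j|^{\ell_j}\sum_{r=0}^{\ell_j}\frac{(10k/\ell_j)^{\ell_j-r}}{r!}
\;\le\; \Bigl(\frac{12k^2|\mathcal{P}_j|}{\ell_j}\Bigr)^{\ell_j},
\]
the last step using $\sum_r(\ell_j/(10k))^r/r!\le e^{\ell_j/10}<1.2^{\ell_j}$. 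Since the base exceeds $1$ and $2r_k\ge k/(k-1)$, raising to the $k/(k-1)$ power is dominated by raising to $2r_k\ell_j$. This single estimate removes the need for your sub-case analysis and the boundary-matching you flagged as the main obstacle; otherwise the arguments coincide.
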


\begin{proof}
We will prove this lemma analogous to \cite[Lemma 3.4]{GZ21} and \cite[Lemma 1]{HS20}.
For $|x|\leq \frac{K}{10}$, we have
\begin{equation*}
|\sum_{r=0}^{K}\frac{x^r}{r!}-e^x|\leq \frac{|x|^K}{K!}\leq (\frac{e}{10})^K
\leq e^{-|x|-K},
\end{equation*}
then
\begin{equation*}
\sum_{r=0}^{K}\frac{x^r}{r!}=e^x(1+O(
e^{-K}))
\end{equation*}
and the $O$-constant could takes $1$.
For $x=\alpha{\mathcal P}_j(d)$ and $K=\ell_j$, with $\alpha>0$ and $|\alpha{\mathcal P}_j(d)|\leq \ell_j /10$, we have
\begin{equation*}
{\mathcal N}_j(d,\alpha)= \exp(\alpha{\mathcal P}_j(d))(1+O(e^{-\ell_j})).
\end{equation*}
Thus when $|{\mathcal P}_j(d)|\leq \frac{\ell_j}{10k}$, we have
\begin{equation}\label{eqn:N<1}
{\mathcal N}_j(d,k-1)^{\frac{k}{k-1}}\leq
\exp(k{\mathcal P}_j(d))(1+e^{-\ell_j})^{\frac{k}{k-1}}\leq
|{\mathcal N}_j(d,k)|(1+e^{-\ell_j})^{\frac{k}{k-1}}
(1-e^{-\ell_j})^{-1}.
\end{equation}
When $|{\mathcal P}_j(d)|\geq \frac{\ell_j}{10k}$, we know
\begin{equation}\label{eqn:N<2}
|{\mathcal N}_j(d,k-1)|\leq
\sum_{r=0}^{\ell_j}\frac{|(k-1){\mathcal P}_j(d))|^r}{r!}\leq |k{\mathcal P}_j(d))|^{\ell_j}\sum_{r=0}^{\ell_j}
(\frac{10k}{\ell_j})^{\ell_j-r}\frac{1}{r!}
\leq(\frac{12k^2|{\mathcal P}_j(d))|}{\ell_j})^{\ell_j}.
\end{equation}
By \eqref{eqn:N<1} and \eqref{eqn:N<2}, we complete the proof of the lemma.
\end{proof}

From $\prod_{j=1}^{R}\max(\frac{(1+e^{-\ell_j})^{\frac{k}{k-1}}}
{(1-e^{-\ell_j})},1)\ll 1$, we have
\begin{equation}\label{eqpp2}
\begin{split}
\sum_{2\nmid d}\nolimits^\flat {\mathcal N}(d,k-1)^{\frac{k}{k-1}}
\Phi(\frac{d}{X})&\ll
\sum_{2\nmid d}\nolimits^\flat \prod_{j=1}^{R}({\mathcal N}_j(d,k)+(\frac{12k^2{\mathcal P}_j(d)}{\ell_j})^{2r_k\ell_j})
\Phi(\frac{d}{X})\\
&=:\sum_{2\nmid d}\nolimits^\flat \prod_{j=1}^{R}(\sum_{n_j\leq X^{\frac{2r_k}{\ell_j}}}\frac{a_{n_j}b_j(n_j)}
{\sqrt{n_j}}\chi_{8d}(n_j))\Phi(\frac{d}{X}).
\end{split}
\end{equation}

Recall that
\begin{equation*}
{\mathcal N}_j(d,k)=\sum_{n_j}\frac{A(n_j)}
{\sqrt{n_j}}\frac{k^{\Omega(n_j)}}
{g(n_j)}b_j(n_j)\chi_{8d}(n_j),
\end{equation*}
and
\begin{equation*}
{\mathcal P}_j(d)^{2r_k\ell_j}=\sum_{\substack{
\Omega(n_j)=2r_k\ell_j\\p\mid n_j\Rightarrow p\in P_j}}\frac{A(n_j)}{\sqrt{n_j}}
\frac{(2r_k\ell_j)!}{g(n_j)}\chi_{8d}(n_j),
\end{equation*}
where $r_k=\lceil \frac{k}{2k-2}\rceil$.
From
\begin{equation}\label{n!eq}
(\frac{n}{e})^n\leq n!\leq n(\frac{n}{e})^n,
\end{equation}
we know
\begin{equation*}
(\frac{12k^2}{\ell_j})^{2r_k\ell_j}
(2r_k\ell_j)!\leq 2r_k\ell_j(\frac{24k^2}{e})^{2r_k\ell_j}.
\end{equation*}
By setting $M$ large enough so that $\ell_R>12r_k$,
we have $|a_{n_j}|\leq B_k^{\ell_j}n_j^{\theta_3}$ with some constant $B_k$ depending only on $k$.

Using Lemma \ref{prsum} on the right side in  \eqref{eqpp2}, the contribution from error terms in Theorem \ref{prsum} are
\begin{equation*}
\ll X^{\frac{1}{2}+\varepsilon}
\prod_{j=1}^{\ell_j} B_k^{\ell_j}
X^{(1+\theta_3)\frac{2r_k}{\ell_j}}\ll
B_k^{R\ell_1}X^{\frac{1}{2}
+\frac{4(1+\theta_3)r_k}{\ell_R}+\varepsilon}
\ll X^{1-\varepsilon}.
\end{equation*}
The main term of the right side in \eqref{eqpp2} is a scalar multiple of
\begin{multline*}
  X\times\sum_{\prod_{j=1}^R n_j=\square}(\prod_{j=1}^{R}
\frac{a_{n_j}b_j(n_j)}
{\sqrt{n_j}}\prod_{p\mid n_j}\frac{p}{p+1})
=
X\times\prod_{j=1}^{R}
\sum_{n_j=\square}\frac{a_{n_j}b_j(n_j)}
{\sqrt{n_j}}\prod_{p\mid n_j}\frac{p}{p+1}
\\
=X\times\prod_{j=1}^{R}
\Big (
\sum_{n_j=\square}\frac{A(n_j)}{\sqrt{n_j}}
\frac{k^{\Omega(n_j)}}{g(n_j)}b_j(n_j)
\prod_{p\mid n_j}\frac{p}{p+1}
\\
 +(\frac{12k^2}{\ell_j})^{2r_k\ell_j}
(2r_k\ell_j)!\sum_{\substack{n_j=\square \\
\Omega(n_j)=2r_k\ell_j\\p\mid n_j\Rightarrow p\in P_j}}
\frac{A(n_j)}{\sqrt{n_j}}
\frac{1}{g(n_j)}
\prod_{p\mid n_j}\frac{p}{p+1}
\Big ).
\end{multline*}

From $|A(p,1)|\leq 3p^{\theta_3}$ and the Taylor expansion of $\exp(x)$ we can see
\begin{equation*}
\begin{split}
\sum_{n_j=\square}\frac{A(n_j)}{\sqrt{n_j}}
\frac{k^{\Omega(n_j)}}{g(n_j)}b_j(n_j)
\prod_{p\mid n_j}\frac{p}{p+1}
&\leq \prod_{p\in P_j}\Big(1+\sum_{1\leq i\leq \lceil \frac{\ell_i}{2}\rceil}\frac{A(p,1)^{2i}k^{2i}}{p^{i}(2i)!}
\frac{p}{p+1}\Big)
\\
&\leq \prod_{p\in P_j}(1+\frac{A(p,1)^2k^2}{2p}\frac{p}{p+1}
(1+\sum_{i\geq 1}\frac{(3k)^{2i}}{p^{(1-2\theta_3)i}(2i)!}))\\
&\leq \exp\Big(\frac{k^2}{2}\sum_{p\in P_j}\frac{A(p,1)^2}{p}(1+
\frac{e^{3k}}{p^{1-2\theta_3}})\Big)\\
&\ll_k \exp(\frac{k^2}{2}\sum_{p\in P_j}\frac{A(p,1)^2}{p}).
\end{split}
\end{equation*}

From \eqref{doueq} and \eqref{n!eq} we know
\begin{equation*}
\begin{split}
&(\frac{12k^2}{\ell_j})^{2r_k\ell_j}
(2r_k\ell_j)!\sum_{\substack{n_j=\square\\
\Omega(n_j)=2r_k\ell_j\\p\mid n_j\Rightarrow p\in P_j}}
\frac{A(n_j)}{\sqrt{n_j}}
\frac{1}{g(n_j)}
\prod_{p\mid n_j}\frac{p}{p+1}\\
&
\leq
(\frac{12k^2}{\ell_j})^{2r_k\ell_j}
\frac{(2r_k\ell_j)!}{(r_k\ell_j)!}
(\sum_{p\in P_j}
\frac{A(p,1)^2}{p})^{r_k\ell_j}
\\
&
\leq
2r_k\ell_j(\frac{576k^4r_k}{e\ell_j})
^{r_k\ell_j}(\sum_{p\in P_j}
\frac{A(p,1)^2}{p})^{r_k\ell_j}\\
&
\leq
2r_k\ell_j(\frac{1152k^4r_k}{eN})
^{r_k\ell_j}\\
&
\ll e^{-\ell_j}\exp\Big(\frac{k^2}{2}\sum_{p\in P_j}\frac{A(p,1)^2}{p}\Big).
\end{split}
\end{equation*}

Then we have
\begin{equation*}
\begin{split}
\sum_{2\nmid d}\nolimits^\flat {\mathcal N}(d,k-1)^{\frac{k}{k-1}}
\Phi(\frac{d}{X})&\ll
X\prod_{j=1}^{R}(1+e^{-\ell_j})
\exp\Big(\frac{k^2}{2}\sum_{p\in P_j}\frac{A(p,1)^2}{p}\Big)\ll X(\log X)^{\frac{k^2}{2}}.
\end{split}
\end{equation*}
This proves Proposition \ref{pp2}.

\section*{Acknowledgements}
The authors would like to thank referees for their careful reading and nice comments.

\end{document}